\setlist{nolistsep}
\def \PFq[#1]{\mathbf{P}^#1(\mathbb{F}_q)}
\def \PF[#1,#2]{PG(#1,#2)}
\declaretheorem[name=Theorem]{theorem}
\newtheorem{lemma}[theorem]{Lemma}
\begin{document}
\title{A note on the largest induced matching in graphs avoiding a fixed bipartite graph}
\author{
Ben Lund\thanks{
		Department of Mathematics, Fine Hall, Princeton University, Princeton NJ 08544; {\sl lund.ben@gmail.com}. Research supported by NSF grant DMS-1802787.}
\and
Daniel Reichman\thanks{Department of Computer Science, Worcester Polytechnic Institute, Worcester, MA, 01609; {\sl daniel.reichman@gmail.com.}}	
}
\maketitle

\begin{abstract}
	We give a simple proof that every $n$-vertex graph $d$-regular graph that does not contain a fixed bipartite graph as a subgraph has an induced matching of size $\Omega((n/d)(\log d))$.
\end{abstract}

An \emph{induced matching} in an undirected graph $G$
is a matching $M$ where no two edges in $M$ are connected by a third edge. Induced matchings have received
attention in several contexts such as radio networks and parallelization capacity of neural systems.
One observation is that random or pseudorandom architectures that contain large induced matchings are useful
for interference-free processing~\cite{alon2017graph}. There has also been recent interest in approximation algorithms
for induced matchings~\cite{chalermsook2013graph}.

Mahdian showed~\cite{mahdian2000strong} that a graph with maximum degree $d$ that does not contain $C_4$ as a subgraph has strong chromatic index at most $(2+o(1))d^2/\ln d$.
The chromatic index is the minimum number of induced matchings that partition the edge set.
Since the number of edges in an $n$-vertex, $d$-regular graph is $nd/2$, Mahdian's result in particular implies that a $C_4$-free graph contains an induced matching of size $(n/(4-o(1))d) \ln d$.

Alon, Krivelevich, and Sudakov~\cite{alon1999coloring} gave a more general, but quantitatively weaker, result.
They show that the chromatic number of any graph with maximum degree $d$ in which the number of edges in the induced subgraph on the set of neighbors of any vertex does not exceed $d^2/f$ is at most $O(d/\log f)$.
The strong chromatic index of a graph $G$ is the chromatic number of the square of the line graph $L(G)^2$ of $G$.
Suppose that $G$ is an $n$-vertex, $d$-regular graph that does not have $C_4$ as a subgraph.
Note that the maximal degree of $L(G)^2$ is upper bounded by $2d^2$.
Every edge in the induced subgraph of $L(G)^2$ on the neighborhood of a vertex $v$ in $L(G)^2$ corresponds to an edge among $O(d^2)$ vertices of $G$.
Since $G$ is $C_4$-free, by the K\H{o}v\'ari-S\'os-Tur\'an theorem there are at most $O(d^{3})$ edges among these $O(d^2)$ vertices.
Hence, we may take $f=\Omega(d)$ in the theorem of Alon, Krivelevich, and Sudakov, which establishes that the chromatic number of $L(G)^2$ (and hence the strong chromatic index of $G$) is $O(d^2/\log d)$.

Here, we give a simple proof that any $d$-regular graph that avoids a fixed bipartite graph must have an induced matching of size $\Omega((n/d) \log d)$.
The regularity assumption is necessary as otherwise the size of a maximum matching might be $1$.
We rely on a standard result that a graph with few triangles has a large independent set \cite[Lemma 12.16]{bollobas2001random}.
For convenience, we sketch the proof of this result here.

\begin{lemma}~\label{thm:removal}
	Let $\epsilon$ be a constant in $(0,3)$ and suppose that $G$ has $n$ vertices, maximum degree $d$ and at most $nd^{2-\epsilon}$ triangles.
	Then $G$ contains an independent set of size $\Omega((n/d) \log d)$. Furthermore, there is a polynomial algorithm that finds such an independent set with probability $>2/3$.
\end{lemma}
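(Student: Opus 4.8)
The plan is to produce the independent set by a randomized recursive peeling procedure and to reduce the whole statement to a bound on its expected output size. The algorithm is: while the graph is nonempty, choose a vertex $v$ uniformly at random, add $v$ to the set $I$, delete the closed neighborhood $N[v]$, and recurse on what remains. The output is automatically independent (any vertex added after $v$ is chosen from a graph that no longer contains $N[v]$), and the procedure clearly runs in polynomial time, so the entire content is the inequality $\mathbb{E}[|I|] = \Omega((n/d)\log d)$. Given that inequality, the success-probability guarantee follows by amplification: a single run satisfies $|I| \le n$ deterministically, so a reverse Markov inequality gives $\Pr[|I| \ge \tfrac12\mathbb{E}|I|]$ bounded below by a $1/\mathrm{poly}(n)$ quantity, and running the procedure $\mathrm{poly}(n)$ independent times and keeping the largest set returned drives the failure probability below $1/3$.

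For the expectation I would follow the standard Shearer-type analysis and prove, by induction on the number of vertices, an inequality of the form $\mathbb{E}[|I|] \ge \sum_v f(d_v)$, where $d_v$ is the degree of $v$ and $f$ is the function attached to the peeling recursion, with the crucial asymptotics $f(d) = \Theta((\log d)/d)$. The inductive step is where near-independence of neighborhoods is used: when $G$ is triangle-free the set $N(v)$ carries no internal edges, so deleting $N[v]$ destroys comparatively few edges and the residual graph has a strictly smaller average degree. Iterating this degree decrease over the $\Theta(n/d)$ peeling rounds produces a harmonic-type sum, and it is this sum that accounts for the extra $\log d$ factor over the trivial Caro--Wei bound of $n/(d+1)$. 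To finish, I would use that $f$ is decreasing in the relevant range together with the degree bound $d_v \le d$, so that $f(d_v)\ge f(d)$ for every $v$ and hence $\sum_v f(d_v) \ge n\,f(d) = \Omega((n/d)\log d)$.

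The main obstacle, and exactly where the triangle hypothesis is spent, is that our graph is only nearly triangle-free: a chosen vertex $v$ may have several edges inside $N(v)$, and each such edge is a triangle through $v$, so deleting $N[v]$ can behave worse than the clean triangle-free recursion assumes. I would control this by charging the defect to the triangle count: the number of edges lying inside neighborhoods equals the number of triangles up to a factor of three, and by hypothesis this total is at most $nd^{2-\epsilon}$, so on average a vertex lies in only $O(d^{2-\epsilon})$ triangles. The technical heart of the argument is to show that the cumulative loss incurred by these neighborhood edges across all peeling rounds remains a lower-order term, negligible against the main $\Omega((n/d)\log d)$ gain; this is precisely what the hypothesis $\epsilon>0$ guarantees, and the stated range $\epsilon\in(0,3)$ is what keeps this error strictly subdominant while leaving the conclusion nontrivial. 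Making this charging rigorous -- bounding, in expectation, the extra edges destroyed per step and verifying that their accumulated effect does not erode the logarithmic factor -- is the step I expect to require the most care.
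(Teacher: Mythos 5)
Your overall strategy---redo Shearer's greedy/inductive analysis directly on the nearly-triangle-free graph and absorb the triangles as an error term---is a genuinely different route from the paper's, but as written it has a real gap: the step you yourself flag as ``requiring the most care'' is the entire content of the lemma, and it is not routine. Concretely, your charging argument controls only the \emph{average} number of edges inside a neighborhood in the \emph{original} graph ($3T/n \le 3d^{2-\epsilon}$ per vertex). Three things go wrong when you try to push this through the recursion. First, the hypothesis bounds the total triangle count, not the per-vertex count, so individual neighborhoods may span $\Theta(d^2)$ edges; a peeling analysis that averages over the random choice of $v$ must handle this skew. Second, and more seriously, after $i$ rounds the residual graph has $n_i \ll n$ vertices but its triangle count is only bounded by the global $nd^{2-\epsilon}$, so the triangles-per-remaining-vertex ratio can grow without bound as the peeling proceeds; your claim that the cumulative loss ``remains a lower-order term'' across all rounds is exactly what needs proof, and the natural estimate fails once the residual average degree drops below roughly $d^{1-\epsilon/2}$ (at which point $d^{2-\epsilon}$ edges inside a neighborhood is no longer negligible against the $\approx (d')^2$ edges you expect to delete). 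Third, the lemma assumes only a \emph{maximum} degree bound, so the main term $\sum_{u\in N(v)}d_u$ in the edge-deletion count is governed by the average degree, which may be far below $d$, weakening the comparison further. None of these is obviously fatal---one can imagine stopping the peeling early and still harvesting an $\Omega(\log d)$ factor---but you have not done that work, and it is delicate.

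For contrast, the paper sidesteps all of this with one sparsification step: keep each vertex independently with probability $p=d^{-1+\epsilon/3}$. The expected number of surviving triangles is $nd^{2-\epsilon}p^3 = np\,d^{-\epsilon/3} = o(np)$, i.e.\ \emph{fewer triangles than surviving vertices}, so deleting one vertex per triangle costs only a constant fraction of the vertices and yields an exactly triangle-free graph of average degree $O(dp)$. Shearer's theorem then applies as a black box and gives $\Omega(np\log(dp)/(dp)) = \Omega((n/d)\log d)$. This is where the hypothesis $\epsilon>0$ is actually spent, and it avoids reopening Shearer's induction entirely. I would recommend either adopting that reduction or, if you want to keep the direct peeling analysis, writing out the round-by-round error control in full, including how you handle the degradation of the triangle-to-vertex ratio in the residual graphs. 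Your amplification step (reverse Markov plus independent repetition) is fine and recovers the $>2/3$ success probability, though note the paper instead gets a constant success probability in a single run via concentration and Markov bounds on the sampled graph.
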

\begin{proof}
	Let $a=\epsilon/3$. By our assumption, $G$ has at most $nd^{2-3a}$ triangles. Keep every vertex independently with probability $p=d^{-1+a}$ and delete it otherwise.
	The expected number of remaining vertices is $np$, the expected number of remaining edges is at most $ndp^2/2$ and the expected number of surviving triangles is at most $nd^{2-3a}p^3<np/40$ (assuming $d>d_0$ for sufficiently large constant $d_0$).
	Furthermore, by standard concentration results, with probability at least $9/10$ the number of remaining vertices is between $np/2$ and $3np/2$ and by Markov inequality and the choice of parameters with probability at least $9/10$ the number of surviving triangles is at most $np/4$. Deleting a single vertex from each triangle results with a graph $G'$ with at least $np/4$ vertices. Finally, by Markov, the number of edges in $G'$ is larger than $5ndp^2$ with probability at most $1/10$.
	It follows that the average degree of $G'$ is with probability at least $7/10>2/3$ at most $40dp=40d^{a}$. Therefore, using~\cite{shearer1983note, alon2004probabilistic}) $G'$ has an independent set of size $\Omega(np\log(dp)/(dp))=\Omega((n/d) \log d)$ and it can be found in polynomial time. 
\end{proof}
The algorithm above can be derandomized using a family $4$-wise independent random variables. We omit the details.

The claimed result on induced matchings follows by applying Lemma~\ref{thm:removal} to a sufficiently large, not necessarily induced, matching.

\begin{theorem}\label{thm:inducedmatching}
	Let $H$ be bipartite graph with $B>2$ vertices where $B$ is a constant independent of $n$.
	If $G$ is an $n$-vertex $d$-regular graph avoiding $H$ as a subgraph, then it must contain an induced matching of size $\Omega((n/d)\log d)$.
\end{theorem}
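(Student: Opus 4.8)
The plan is to reduce the statement to Lemma~\ref{thm:removal} by passing to an auxiliary \emph{conflict graph} whose independent sets are exactly the induced matchings of $G$. First I would extract a large ordinary matching $M$ in $G$. Since $G$ is $d$-regular it has $nd/2$ edges, and a maximal matching covers every edge; as the two endpoints of a matched edge are incident to at most $2d-1$ edges, maximality forces $|M| \ge (nd/2)/(2d-1) \ge n/4$. Thus I obtain $M = \{e_1,\dots,e_t\}$ with $t = \Omega(n)$.

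Next I would build the conflict graph $\Gamma$ on vertex set $M$, joining $e_i$ to $e_j$ whenever $G$ contains an edge between an endpoint of $e_i$ and an endpoint of $e_j$. By construction, an independent set in $\Gamma$ is a set of matched edges no two of which are joined by a further edge of $G$, i.e.\ precisely an induced matching of $G$, so it suffices to produce a large independent set in $\Gamma$. The maximum degree of $\Gamma$ is easy to control: the endpoints of $e_i$ send at most $2d-2$ edges to other vertices, each landing in a distinct matched edge, so $\Delta(\Gamma) \le 2d$.

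The crux is bounding the number of triangles in $\Gamma$, and this is where the forbidden subgraph enters. Since $H$ is bipartite on $B$ vertices we have $H \subseteq K_{B,B}$, so $G$ and every one of its induced subgraphs is $K_{B,B}$-free; by the K\H{o}v\'ari--S\'os--Tur\'an theorem any set $W$ of vertices of $G$ spans $O(|W|^{2-1/B})$ edges. For a fixed $e_i$, let $T_i$ be the set of all endpoints of the matched edges lying in $N_\Gamma(e_i)$; since $|N_\Gamma(e_i)| \le 2d$ we get $|T_i| \le 4d$. Every triangle of $\Gamma$ through $e_i$ is witnessed by an edge of $G$ inside $T_i$, and distinct triangles (distinct pairs of matched edges) use distinct witness edges, since each vertex lies in a unique matched edge. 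Hence the number of triangles through $e_i$ is at most $e(G[T_i]) = O(d^{2-1/B})$, and summing over the $t$ edges of $M$ gives a total of $O(t\,d^{2-1/B}) = O(n\,d^{2-1/B})$ triangles in $\Gamma$.

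Finally I would apply Lemma~\ref{thm:removal} to $\Gamma$ with, say, $\epsilon = 1/(2B) \in (0,3)$: for $d$ larger than a suitable constant the factor $d^{1/(2B)}$ absorbs the implied constant, so $\Gamma$ has at most $t\,(2d)^{2-\epsilon}$ triangles while having $t$ vertices and maximum degree at most $2d$. The lemma then yields an independent set in $\Gamma$ of size $\Omega\big((t/d)\log d\big) = \Omega\big((n/d)\log d\big)$, which is exactly the desired induced matching, and the polynomial-time guarantee is inherited directly. The main obstacle is the triangle bound of the third paragraph—in particular verifying that distinct triangles through $e_i$ correspond to distinct edges of $G$ within the bounded set $T_i$, so that K\H{o}v\'ari--S\'os--Tur\'an can be invoked locally; once this is in place the remaining steps are routine bookkeeping. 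The degenerate small-degree regime, where $d$ is below the constant required by Lemma~\ref{thm:removal}, should be disposed of separately by a direct greedy argument.
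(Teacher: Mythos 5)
Your proposal is correct and follows essentially the same route as the paper: extract a matching of size at least $n/4$, pass to the graph on the matched edges (your conflict graph $\Gamma$ is exactly the paper's contracted graph $G_M$), bound its triangles by $O(nd^{2-1/B})$ via the K\H{o}v\'ari--S\'os--Tur\'an theorem applied to the $\le 4d$ endpoints around each matched edge, and invoke Lemma~\ref{thm:removal}. The only cosmetic difference is that you obtain the initial matching from a maximal-matching counting argument rather than from Vizing's theorem, and you are somewhat more explicit about the witness-edge injection and the choice of $\epsilon$; neither changes the substance.
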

\begin{proof}
	It an easy consequence of Vizing's theorem that an $n$-vertex, $d$-regular graph must contain a matching of size at least $dn/(2d+2) \geq n/4$ ~\cite{yuster2013maximum}.
	Choose a matching $M$ of size $n/4$ and consider the subgraph $G'$ of $G$ induced by $V(M)$.
	Let $G_M$ be the graph obtained from $G'$ by contracting the edges of $M$.
	For any fixed vertex $v \in G_M$, every edge in the induced subgraph $G_v$ of $G_M$ on the neighborhood of $v$ corresponds to an edge in $G$ among one of at most $4d$ vertices.
	Since $G$ does not contain a $K_{B,B}$, the K\H{o}v\'ari-S\'os-Tur\'an theorem implies that $G_v$ has at most $O(d^{2-1/B})$ edges.
	This in turn implies that $G_M$ contains $O(nd^{2-1/B})$ triangles.
	The result now follows from Lemma~\ref{thm:removal}.
\end{proof}

{\bf Acknowledgments:} We thank Luke Postle and Ross Kang for informing us of Mahdian's result. We are very grateful to Benny Sudakov for useful discussions and informing us of Lemma~\ref{thm:removal}.

\bibliographystyle{alpha}
\bibliography{IM}

\end{document}